\documentclass[11pt, reqno]{amsart}
\usepackage{amsmath, amsthm, amscd, amsfonts, amssymb, graphicx, color}
\usepackage[bookmarksnumbered, colorlinks, plainpages]{hyperref}
\input{mathrsfs.sty}

\textheight 22.5truecm \textwidth 15truecm
\setlength{\oddsidemargin}{0.35in}\setlength{\evensidemargin}{0.35in}

\setlength{\topmargin}{-.5cm}

\newtheorem{theorem}{Theorem}[section]
\newtheorem{lemma}[theorem]{Lemma}
\newtheorem{proposition}[theorem]{Proposition}
\newtheorem{corollary}[theorem]{Corollary}
\theoremstyle{definition}

\newtheorem{example}[theorem]{Example}

\theoremstyle{remark}
\newtheorem{remark}[theorem]{Remark}

\begin{document}
\title[operator log-convex functions ]{  operator log-convex functions and $f$-divergence functional }

\author[ M. Kian]{ Mohsen Kian}

\address{Mohsen Kian:\ \
School of Mathematics, Institute for Research in Fundamental Sciences (IPM), P.O. Box: 19395-5746, Tehran, Iran.}

 \address{Department of Mathematics, Faculty of Basic Sciences, University of Bojnord, P. O. Box
1339, Bojnord 94531, Iran.}
\email{ kian@member.ams.org and kian@ub.ac.ir.}

\subjclass[2010]{47A63, 47A64,  15A60, 26D15.}

\keywords{operator log-convex function, Non-commutative $f$-divergence functional, perspective function,  positive operator, operator mean, operator convex function}

\begin{abstract}
 We present a characterization of operator log-convex functions by using positive linear mappings. More precisely, we show that the continuous function $f:(0,\infty)\to(0,\infty)$ is operator log-convex if and only if $f(\Phi(A))\leq(\Phi(f(A)^{-1}))^{-1}$ for every  strictly positive operator $A$ and every unital positive linear map $\Phi$. Moreover, we study  the  non-commutative $f$-divergence functional of operator log-convex functions. In particular, we prove  that $f$ is operator log-convex if and only if  the non-commutative $f$-divergence functional  is operator log-convex in its first variable and operator convex in its second variable.

\end{abstract} \maketitle

\section{Introduction and Preliminaries }
Throughout this paper, assume that $\mathbb{B}(\mathscr{H})$ is the  $C^*$-algebra of all bounded linear operators on a Hilbert space $\mathscr{H}$ and $I$ denote the identity operator. An operator $A\in\mathbb{B}(\mathscr{H})$ is called positive (denoted by $A\geq 0$) if $\langle Ax,x\rangle\geq 0$ for every $x\in\mathscr{H}$. If in addition $A$ is invertible, then it is called strictly positive  (denoted by $A>0$). A linear map $\Phi$ on $\mathbb{B}(\mathscr{H})$ is said  to be positive if $\Phi(A)\geq  0$ whenever $A\geq0$ and is called unital if $\Phi(I)=I$.

A continuous real  function $f$ defined on an interval $J$ is
said to be operator convex if
\begin{eqnarray*}
 f(\lambda A+(1-\lambda)B)\leq \lambda f(A)+(1-\lambda)f(B)
\end{eqnarray*}
for all self-adjoint operators $A,B$ with spectra contained in $J$ and every $\lambda\in[0,1]$. If $-f$ is operator convex, then $f$ is said to be operator concave. If $f:J\to \mathbb{R}$ is operator convex, then the celebrated  Hansen--Pedersen--Jensen operator inequality \cite{HP}  $f(C^*AC)\leq C^*f(A)C$ holds true for every self-adjoint operator $A$ with spectrum  contained in $J$ and every isometry $C$. Another variant of  this inequality,  the
Choi--Davis--Jensen inequality asserts that if $f$ is operator convex, then
\begin{align}\label{cdj}
  f(\Phi(A))\leq \Phi(f(A))
\end{align}
for every unital positive linear map $\Phi$ on $\mathbb{B}(\mathscr{H})$ (see e.g. \cite{Fu}).  The reader is referred to \cite{Fu,HPP,IMP, KM} and references therein for more information about operator convex functions and the  Jensen operator inequality.

 Let $\mathfrak{A}$ and $\mathfrak{B}$ be $C^*$-algebras of Hilbert space operators and   $T$ be a locally compact Hausdorff space with a bounded Radon measure $\mu$.  A field $(A_t)_{t\in T}$
of operators in $\mathfrak{A}$ is said to be continuous if the function $t\mapsto A_t$ is norm continuous on $T$. Moreover, if  the function $t\mapsto A_t$ is integrable on $T$, then the Bochner integral $\int_TA_td\mu(t)$ is
defined to be the unique element of $\mathfrak{A}$ for which
\begin{eqnarray*}
\rho\left(\int_TA_td\mu(t)\right)=\int_T\rho(A_t)d\mu(t),
\end{eqnarray*}
for any linear functional $\rho$ in the norm dual $\mathfrak{A}^*$ of $\mathfrak{A}$.

  A field $(\Phi_t)_{t\in T}:\mathfrak{A}\to\mathfrak{B}$ of positive linear mappings is said to be continuous if the function $t\mapsto\Phi_t(A)$ is continuous on $T$ for every $A\in\mathfrak{A}$. If the $C^*$-algebras $\mathfrak{A}$ and
$\mathfrak{B}$ are unital and the function $t\mapsto \Phi_t(I)$ is integrable on $T$ with integral $I$, then we say that the field $(\Phi_t)_{t\in T}$ is unital.

By the well-known Kubo--Ando theory \cite{KA}, an operator mean $\sigma$ is a binary operation on the set of positive operators which satisfies the following conditions:
\begin{enumerate}\label{prt}
  \item monotonicity:  \ \ if $A\leq C$ and $B\leq D$, then $A\sigma B\leq C\sigma D$;
  \item Transformer inequality: \ \ $C(A\sigma B)C\leq (CAC)\sigma(CBC)$. If $C$ is invertible, then equality holds.
      \item Continuity:  \ \ if $A_n$ and $B_n$ are two decreasing sequences of positive operators which are converging respectively  to $A$ and $B$ in the strong operator topology, then $A_n\sigma B_n$ converges  to $A\sigma B$.
\end{enumerate}
Kubo and Ando \cite{KA} showed that for every operator mean $\sigma$ there exists an operator monotone function $f:(0,\infty)\to (0,\infty)$ such that
\begin{align}\label{mean}
A\sigma B=B^{\frac{1}{2}}f\left(B^{\frac{-1}{2}}AB^{\frac{-1}{2}}\right)B^{\frac{1}{2}}
\end{align}
for all positive operators $A,B$. Conversely, they proved that  if $f:(0,\infty)\to (0,\infty)$ is operator monotone, the binary operation  defined by \eqref{mean} is an operator mean.  Some of the  most familiar  operator means are
\begin{align*}
  A\nabla B&=\frac{A+B}{2}\qquad\qquad\qquad\qquad \mbox{ Arithmetic mean}\\
   A\sharp B&=B^{\frac{1}{2}}\left(B^{\frac{-1}{2}}AB^{\frac{-1}{2}}\right)^{\frac{1}{2}}B^{\frac{1}{2}}\quad\quad \mbox{Geometric mean}\\
   A!B&=\left(\frac{A^{-1}+B^{-1}}{2}\right)^{-1}\qquad\qquad \mbox{Harmonic mean}.
   \end{align*}

A continuous real function $f:(0,\infty)\to(0,\infty)$ is called operator log-convex function if
$$f(A\nabla B)\leq f(A)\sharp f(B)$$
for all positive operators $A$ and $B$. This notion was considered   by Ando and Hiai \cite{AH}.
They presented the following result.\\
\textbf{Theorem A.}\cite[Theorem 2.1]{AH} Let $f:(0,\infty)\to(0,\infty)$ be continuous. The following conditions are equivalent:
\begin{enumerate}
  \item $f$ is operator decreasing;
  \item $f$ is operator log-convex;
  \item $f(A\nabla B)\leq f(A)\sigma f(B)$ for all positive operators $A$ and $B$ and every operator mean $\sigma$;
  \item $f(A\nabla B)\leq f(A)\sigma f(B)$ for all positive operators $A$ and $B$ and some operator mean $\sigma\neq\nabla$.
\end{enumerate}

If $f$ is operator convex, then \eqref{mean}  defines  \cite{G,E}   the perspective of $f$  denoted by $g$, i.e.,
$$g(A,B)=B^{\frac{1}{2}}f\left(B^{\frac{-1}{2}}AB^{\frac{-1}{2}}\right)B^{\frac{1}{2}}.$$
It is known  that $f$ is operator convex if and only if $g$ is jointly  operator convex \cite{G,E}. A more general version of  $g$, the non-commutative $f$-divergence functional $\Theta$ was defined in \cite{MK} to be
\begin{eqnarray*}
 \Theta(\widetilde{A},\widetilde{B})=\int_TB_t^{\frac{1}{2}}f\left(B_t^{-\frac{1}{2}}
 A_tB_t^{-\frac{1}{2}}\right)B_t^{\frac{1}{2}}d\mu(t),
\end{eqnarray*}
where $\widetilde{A}=(A_t)_{t\in T}$ and $\widetilde{B}=(B_t)_{t\in T}$ are continuous fields of strictly  positive operators in $\mathfrak{A}$.

In section 2, we give some properties of operator log-convex functions. We present a characterization of operator log-convex functions by using positive linear mappings. More precisely, we show that the continuous function $f:(0,\infty)\to(0,\infty)$ is operator log-convex if and only if $f(\Phi(A))\leq(\Phi(f(A)^{-1}))^{-1}$ for every strictly positive operator $A$ and every unital positive linear map $\Phi$.

In section 3, we study    the non-commutative $f$-divergence functional of operator log-convex functions. In particular, we prove that $f$ is operator log-convex if and only if $\Theta$ is operator log-convex in its first variable and operator convex in its second variable.
\section{Main Result}

  If $f$ is operator log-convex, then a sharper variant of the Jensen operator inequality holds true. The proof of the next lemma is based on that of \cite[Theorem 1.9]{Fu}.
  \begin{lemma}\label{jl}
    If $f:(0,\infty)\to(0,\infty)$ is an operator log-convex function,  then
    $$f\left(C^*AC\right)\leq \left(C^*f(A)^{-1}C\right)^{-1}$$
   for every  strictly positive operator $A$ and  every isometry $C$ provided that $C^*f(A)^{-1}C$ is invertible.
  \end{lemma}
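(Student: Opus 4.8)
The plan is to reduce the statement to the arithmetic--harmonic inequality supplied by operator log-convexity, evaluated on a suitable $2\times2$ block dilation of the isometry $C$. By Theorem A, operator log-convexity of $f$ is equivalent to $f(X\nabla Y)\leq f(X)\sigma f(Y)$ for every operator mean $\sigma$; choosing $\sigma$ to be the harmonic mean yields the working inequality
\[
f\left(\frac{X+Y}{2}\right)\leq\left(\frac{f(X)^{-1}+f(Y)^{-1}}{2}\right)^{-1}
\]
for all strictly positive $X,Y$. The whole proof then consists of choosing $X$ and $Y$ as unitary conjugates of a single strictly positive block operator, so that the left-hand side carries $f(C^*AC)$ in its upper-left corner while the right-hand side carries $(C^*f(A)^{-1}C)^{-1}$ in the same corner.

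Concretely, first I would set $D=(I-CC^*)^{1/2}$ and form the operator $U=\begin{pmatrix} C & D\\ 0 & -C^*\end{pmatrix}$ on $\mathscr{H}\oplus\mathscr{H}$. Since $C^*C=I$ gives $C^*(I-CC^*)=0$, which forces $C^*D=0=DC$, a direct computation shows $U^*U=UU^*=I$, so $U$ is unitary. Writing $W=\mathrm{diag}(I,-I)$ and $V=UW$ (again unitary), conjugation by $W$ flips the signs of the off-diagonal blocks, so with $\widetilde{A}=\mathrm{diag}(A,A)$ the cross terms cancel upon averaging and one finds
\[
\frac{1}{2}\left(U^*\widetilde{A}U+V^*\widetilde{A}V\right)=\begin{pmatrix} C^*AC & 0\\ 0 & DAD+CAC^*\end{pmatrix}.
\]
Because $A>0$, the operator $\widetilde{A}$ is strictly positive, and hence so are the two conjugates $X=U^*\widetilde{A}U$ and $Y=V^*\widetilde{A}V$.

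Next I would feed $X,Y$ into the harmonic-mean inequality. On the left, functional calculus on a block-diagonal operator gives $f\left(\tfrac12(X+Y)\right)=\mathrm{diag}\big(f(C^*AC),\,f(DAD+CAC^*)\big)$. On the right, unitary invariance of the functional calculus yields $f(X)^{-1}=U^*f(\widetilde{A})^{-1}U$ and $f(Y)^{-1}=V^*f(\widetilde{A})^{-1}V$ with $f(\widetilde{A})^{-1}=\mathrm{diag}(f(A)^{-1},f(A)^{-1})$; repeating the averaging computation with $f(A)^{-1}$ in place of $A$ shows that $\tfrac12\big(f(X)^{-1}+f(Y)^{-1}\big)$ is again block-diagonal, with upper-left corner $C^*f(A)^{-1}C$. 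Inverting this block-diagonal operator and comparing the $(1,1)$ entries of the resulting operator inequality (i.e. compressing by the isometry $x\mapsto(x,0)$, which preserves order) produces exactly $f(C^*AC)\leq(C^*f(A)^{-1}C)^{-1}$.

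The main obstacle is essentially the bookkeeping that makes the dilation work: verifying that $U$ is unitary (which hinges on $C^*D=0$, valid precisely because $C$ is an isometry) and choosing $V=UW$ so that the off-diagonal blocks cancel after averaging, leaving a genuinely block-diagonal operator to which the harmonic-mean inequality applies corner by corner. One should also track strict positivity throughout: since the spectrum of $A>0$ lies in some $[m,M]\subset(0,\infty)$ and $f$ is continuous and strictly positive, $f(A)^{-1}$ is bounded and bounded below, so every inverse above exists; in particular $C^*f(A)^{-1}C\geq\varepsilon\,C^*C=\varepsilon I$ is invertible, so the stated proviso is met under the present hypotheses.
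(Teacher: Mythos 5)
Your proposal is correct and takes essentially the same route as the paper's own proof: the identical unitary dilation $U$ and $V=UW$ built from $D=(I-CC^*)^{1/2}$, the same averaging to a block-diagonal operator, and the same harmonic-mean inequality $f(X\nabla Y)\leq f(X)\,!\,f(Y)$ (via Theorem A) read off in the $(1,1)$ corner; the only cosmetic difference is that the paper dilates $\mathrm{diag}(A,B)$ with an arbitrary second block where you use $\mathrm{diag}(A,A)$, which is immaterial since only the upper-left corner is compared. Your closing remark that the invertibility proviso $C^*f(A)^{-1}C>0$ is automatic for strictly positive $A$ is a small bonus not spelled out in the paper.
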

  \begin{proof}
  If $A$ and $B$ are two strictly positive operators in $\mathbb{B}(\mathscr{H})$,  then  $X=
\left(
\begin{array}{cc}
A&0 \\
0&B
\end{array}\right)$ is regarded as a strictly positive operator in $\mathbb{B}(\mathscr{H}\oplus\mathscr{H})$.  Set  $D=\sqrt{I-CC^*}$ so that  operators $U$ and $V$  defined by
 \begin{eqnarray*}
U=\left(
\begin{array}{cc}
C&D \\
0&-C^*
\end{array}\right),\qquad V=\left(
\begin{array}{cc}
C&-D \\
0&C^*
\end{array}\right),
\end{eqnarray*}
are unitary operators in $\mathbb{B}(\mathscr{H}\oplus\mathscr{H})$
and
\begin{align*}
U^*XU=\left(
\begin{array}{cc}
C^*AC&C^*AD \\
DAC &DAD+CBC^*
\end{array}\right),\qquad V^*XV=\left(
\begin{array}{cc}
C^*AC&-C^*AD  \\
-DAC&DAD+CBC^*
\end{array}\right).
\end{align*}
Therefore
\begin{align*}
\left(
\begin{array}{cc}
f(C^*AC)&0 \\
0 &f(DAD+CBC^*)
\end{array}\right)&=f\left(
\begin{array}{cc}
C^*AC&0 \\
0 &DAD+CBC^*
\end{array}\right)\\
&= f\left(\frac{U^*XU+V^*XV}{2}\right)\\
&\leq  f(U^*XU)!f(V^*XV) \quad (\mbox{since $f$ is operator log-convex})\\
& = (U^*f(X)U)!(V^*f(X)V)\quad (\mbox{since $U,V$ are unitary})\\
 & = \left(\frac{(U^*f(X)U)^{-1}+(V^*f(X)V)^{-1}}{2}\right)^{-1}\\
&=\left(\frac{U^*f(X)^{-1}U+V^*f(X)^{-1}V}{2}\right)^{-1}\\
&= \left(
\begin{array}{cc}
C^*f(A)^{-1}C&0 \\
0 &Df(A)^{-1}D+Cf(B)^{-1}C^*
\end{array}\right)^{-1}.
\end{align*}
Hence $f(C^*AC)\leq\ (C^*f(A)^{-1}C)^{-1}$.
  \end{proof}

  Note that the operator convexity of $f(x)=x^{-1}$ implies that
  $$f(C^*AC)\leq\ (C^*f(A)^{-1}C)^{-1}\leq C^*f(A)C.$$
\begin{corollary}\label{co11}
  If $f:(0,\infty)\to(0,\infty)$ is an operator log-convex function  and $A_1,\cdots,A_n$ are strictly positive operators,  then $$f\left(\sum_{i=1}^{n}C_i^*A_iC_i\right)\leq \left(\sum_{i=1}^{n}C_i^*f(A_i)^{-1}C_i\right)^{-1}$$
  for all  operators $C_i$\ $(i=1,\cdots,n)$ with  $\sum_{i=1}^{n}C_i^*C_i=I$.
\end{corollary}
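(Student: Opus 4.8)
The plan is to reduce the statement to the single-isometry case of Lemma \ref{jl} by a standard block-matrix device. First I would pass to the direct sum Hilbert space $\mathscr{H}\oplus\cdots\oplus\mathscr{H}$ ($n$ copies), set the block-diagonal operator $A=\mathrm{diag}(A_1,\dots,A_n)$, and assemble the $C_i$ into a single column operator
\begin{align*}
C=\begin{pmatrix} C_1\\ \vdots\\ C_n\end{pmatrix}:\mathscr{H}\to\mathscr{H}\oplus\cdots\oplus\mathscr{H}.
\end{align*}
The hypothesis $\sum_{i=1}^n C_i^*C_i=I$ says precisely that $C^*C=I$, so $C$ is an isometry, while the strict positivity of each $A_i$ makes $A$ a strictly positive operator on the direct sum.

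Next I would invoke the functional calculus for block-diagonal operators: since $A$ is block-diagonal, so are $f(A)$ and $f(A)^{-1}$, namely $f(A)=\mathrm{diag}(f(A_1),\dots,f(A_n))$ and $f(A)^{-1}=\mathrm{diag}(f(A_1)^{-1},\dots,f(A_n)^{-1})$. A direct block computation then gives
\begin{align*}
C^*AC=\sum_{i=1}^n C_i^*A_iC_i,\qquad C^*f(A)^{-1}C=\sum_{i=1}^n C_i^*f(A_i)^{-1}C_i,
\end{align*}
so both sides of the desired inequality are expressed through $A$ and $C$.

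Before applying Lemma \ref{jl} I would verify its invertibility proviso, i.e.\ that $C^*f(A)^{-1}C$ is invertible. Because $A_i>0$, the spectrum of $A_i$ lies in a compact subinterval of $(0,\infty)$, on which the continuous positive function $f$ is bounded above by some $M_i<\infty$; hence $f(A_i)\leq M_iI$ and $f(A_i)^{-1}\geq M_i^{-1}I>0$. With $\varepsilon=\min_i M_i^{-1}>0$ this yields
\begin{align*}
C^*f(A)^{-1}C=\sum_{i=1}^n C_i^*f(A_i)^{-1}C_i\geq \varepsilon\sum_{i=1}^n C_i^*C_i=\varepsilon I>0,
\end{align*}
so the proviso holds automatically.

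Applying Lemma \ref{jl} to the strictly positive operator $A$ and the isometry $C$ then gives $f(C^*AC)\leq(C^*f(A)^{-1}C)^{-1}$, which is exactly the claimed inequality after substituting the two block computations above. I expect the only point requiring care to be this verification of invertibility (so that the right-hand inverse is meaningful); the reduction itself is routine once the column operator $C$ is chosen correctly.
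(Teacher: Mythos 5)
Your proof is correct and is essentially identical to the paper's own one-line argument: apply Lemma \ref{jl} to $A=A_1\oplus\cdots\oplus A_n$ and the column isometry $C=(C_1,\dots,C_n)^{T}$. Your additional verification that $C^*f(A)^{-1}C$ is invertible (via a uniform lower bound $f(A_i)^{-1}\geq\varepsilon I$) is a welcome bit of care that the paper's proof leaves implicit.
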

\begin{proof}
  Apply  Lemma \ref{jl} to the strictly positive operator $A=A_1\oplus\cdots\oplus A_n$ and the isometry $C=\left(
\begin{array}{c}
C_1\\
\vdots\\
C_n
\end{array}\right)$.
\end{proof}

We can present the following  characterization of operator log-convex functions using positive linear mappings.
  \begin{theorem}\label{tcdjl}
 A continuous function $f:(0,\infty)\to(0,\infty)$ is operator log-convex if and only if
 \begin{align}\label{cdjl}
f(\Phi(A))\leq \Phi\left(f\left(A\right)^{-1}\right)^{-1}
 \end{align}
 for every unital positive linear map $\Phi$ and every strictly positive operator $A$.
  \end{theorem}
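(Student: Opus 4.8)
The plan is to prove the two implications separately, treating the converse (sufficiency of \eqref{cdjl}) as the routine half and the forward implication as the substantive one. Throughout I would exploit the equivalence, recorded in Theorem A, between operator log-convexity and operator decreasingness of $f$, which converts a multiplicative-mean statement into an order statement that interacts cleanly with the Choi--Davis--Jensen inequality \eqref{cdj}.

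For the forward direction, suppose $f$ is operator log-convex. By Theorem A, $f$ is then operator decreasing. Since the map $X\mapsto X^{-1}$ is operator decreasing on strictly positive operators, the function $g:=1/f$ is operator monotone (increasing): indeed $0<A\le B$ forces $f(A)\ge f(B)>0$ and hence $f(A)^{-1}\le f(B)^{-1}$. Now I would invoke the classical fact that a positive operator monotone function on $(0,\infty)$ is operator concave, so that $-g$ is operator convex. Applying the Choi--Davis--Jensen inequality \eqref{cdj} to the operator convex function $-g$ and rearranging signs yields
\[
g(\Phi(A))\ge \Phi\big(g(A)\big),\qquad\text{that is,}\qquad f(\Phi(A))^{-1}\ge \Phi\big(f(A)^{-1}\big),
\]
for every unital positive linear map $\Phi$ and every strictly positive $A$. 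Both sides are invertible: if the spectrum of $A$ lies in a compact subinterval $[m,M]\subset(0,\infty)$, then $f(A)^{-1}\ge cI$ for some $c>0$, whence $\Phi(f(A)^{-1})\ge c\Phi(I)=cI>0$. Taking inverses reverses the order and produces exactly $f(\Phi(A))\le \Phi(f(A)^{-1})^{-1}$, which is \eqref{cdjl}.

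For the converse, assume \eqref{cdjl} holds for every unital positive linear map. On $\mathbb{B}(\mathscr{H}\oplus\mathscr{H})$ I would test it against the map $\Phi$ sending a $2\times 2$ operator matrix to the average of its diagonal blocks; this $\Phi$ is plainly linear, positive and unital. Feeding in the block-diagonal operator $A=A_0\oplus B_0$ with $A_0,B_0$ strictly positive gives $\Phi(A)=A_0\nabla B_0$ on the left, while on the right $f(A)^{-1}=f(A_0)^{-1}\oplus f(B_0)^{-1}$ and so $\Phi(f(A)^{-1})=\tfrac12\big(f(A_0)^{-1}+f(B_0)^{-1}\big)$. Thus \eqref{cdjl} collapses to $f(A_0\nabla B_0)\le f(A_0)\,!\,f(B_0)$, which is precisely condition (4) of Theorem A for the harmonic mean $!\neq\nabla$; hence $f$ is operator log-convex. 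A routine $\varepsilon I$-perturbation and continuity argument upgrades the strictly positive case to all positive operators as required by Theorem A.

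The main obstacle --- and the reason I route the forward direction through $1/f$ rather than through Lemma \ref{jl} and Corollary \ref{co11} --- is that a general positive linear map need not be completely positive, so there is no Stinespring isometry with which to reduce \eqref{cdjl} to the isometry inequality of Lemma \ref{jl}. The conceptual point that rescues the argument is that the Jensen inequality \eqref{cdj} is valid for all unital positive (not merely completely positive) maps; passing to the operator concave function $-g$ is exactly what lets me apply it. The remaining care is purely technical: ensuring $\Phi(f(A)^{-1})$ is invertible before inverting, which the spectral lower bound above guarantees.
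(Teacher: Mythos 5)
Your proof is correct, but the forward direction takes a genuinely different route from the paper. The paper never passes through operator monotonicity: it proves (via a $2\times 2$ unitary dilation argument, Lemma \ref{jl}) the isometry inequality $f(C^*AC)\leq (C^*f(A)^{-1}C)^{-1}$, upgrades it to finite sums $\sum_i C_i^*A_iC_i$ with $\sum_i C_i^*C_i=I$ (Corollary \ref{co11}), and then, for a unital positive map $\Phi$, writes $A=\sum_i\lambda_iP_i$ spectrally in finite dimensions and applies Corollary \ref{co11} with $C_i=\Phi(P_i)^{1/2}$, finishing the infinite-dimensional case by a continuity argument. Interestingly, this spectral device is exactly how the paper dodges the obstacle you flagged (no Stinespring dilation for merely positive maps): the ``isometry'' structure comes from the column of the $\Phi(P_i)^{1/2}$, not from a dilation of $\Phi$. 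Your route instead leans on the deepest part of Theorem A (log-convex $\Rightarrow$ operator decreasing), converts it to operator monotonicity of $1/f$, invokes L\"owner theory (positive operator monotone $\Rightarrow$ operator concave), and applies the Choi--Davis--Jensen inequality \eqref{cdj} to $-1/f$ before inverting; the invertibility check via a spectral lower bound is the right technical care. What your approach buys: it is shorter, works in any dimension at once with no continuity limit, and sidesteps Lemma \ref{jl} and Corollary \ref{co11} entirely. What the paper's approach buys: those intermediate results are of independent interest and are reused (Corollary \ref{co11} feeds directly into the theorem), and the argument stays closer to the Hansen--Pedersen tradition of deriving map inequalities from isometry inequalities. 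Your converse is essentially identical to the paper's --- averaging the diagonal blocks of $A\oplus B$ --- except that you define $\Phi$ on all of $\mathbb{B}(\mathscr{H}\oplus\mathscr{H})$ rather than on the diagonal subalgebra $\mathfrak{D}(\mathscr{H}\oplus\mathscr{H})$ as the paper does, which is if anything cleaner since the theorem is stated for maps on the full algebra.
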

 \begin{proof}
   Suppose that $A$ is a strictly positive operator an a finite dimensional Hilbert space $\mathscr{H}$ with the spectral decomposition $A=\sum_{i=1}^{n}\lambda_iP_i$. If $\Phi$ is a unital positive linear map on $\mathbb{B}(\mathscr{H})$, then $\Phi(A)=\sum_{i=1}^{n}\lambda_i\Phi(P_i)$ and $\sum_{i=1}^{n}\Phi(P_i)=I$. Therefore
   \begin{align*}
     f(\Phi(A))=f\left(\sum_{i=1}^{n}\lambda_i\Phi(P_i)\right)  &=f\left(\sum_{i=1}^{n}\Phi(P_i)^{\frac{1}{2}}\lambda_i\Phi(P_i)^{\frac{1}{2}}\right)\\
     &\leq \left(\sum_{i=1}^{n}\Phi(P_i)^{\frac{1}{2}}f(\lambda_i)^{-1}\Phi(P_i)^{\frac{1}{2}}\right)^{-1}
     \quad(\mbox{by Corollary \ref{co11}})\\
     &=\left(\sum_{i=1}^{n}f(\lambda_i)^{-1}\Phi(P_i)\right)^{-1}\\
     &=\Phi\left(f(A)^{-1}\right)^{-1}.
   \end{align*}
   If $A$ is a strictly positive operator on an infinite dimensional Hilbert space, then \eqref{cdjl}  follows by using a continuity argument.

 For the converse assume that \eqref{cdjl} holds true.
  put
  $$\mathfrak{D}(\mathscr{H}\oplus\mathscr{H})=\left\{\left(
\begin{array}{cc}
A&0 \\
0&B
\end{array}\right);\quad A,B\in \mathbb{B}(\mathscr{H})\right\}.$$
  Then $\mathfrak{D}(\mathscr{H}\oplus\mathscr{H})$ is a unital closed  $*$-subalgebra of $\mathbb{B}(\mathscr{H}\oplus\mathscr{H})$.
    Let the unital positive linear map $\Psi$ be defined on $\mathfrak{D}(\mathscr{H}\oplus\mathscr{H})$ by
 $$\Psi\left(\left(
\begin{array}{cc}
A&0 \\
0&B
\end{array}\right)\right)=\frac{A+B}{2}.$$
Now if $A$ and $B$ are two strictly positive operators on $\mathscr{H}$, then  it follows from \eqref{cdjl} that
\begin{align*}
  f(A\nabla B)&=f\left(\Psi\left(\left(
\begin{array}{cc}
A&0 \\
0&B
\end{array}\right)\right)\right)\leq \Psi\left( f\left(
\begin{array}{cc}
A&0 \\
0&B
\end{array}\right)^{-1}\right)^{-1}\\
&=\Psi\left( \left(
\begin{array}{cc}
f(A)^{-1}&0 \\
0&f(B)^{-1}
\end{array}\right)\right)^{-1}
=\left(\frac{f(A)^{-1}+f(B)^{-1}}{2}\right)^{-1}\\
&=f(A)!f(B)\leq f(A)\sharp f(B),
\end{align*}
Which implies that $f$ is operator log-convex.
 \end{proof}
Note that it follows from the operator convexity of $x\to x^{-1}$ that
$$f(\Phi(A))\leq \Phi\left(f\left(A\right)^{-1}\right)^{-1}\leq \Phi(f(A)).$$
Let us give an example to show that in the case of operator log-convex functions, inequality \eqref{cdjl} is really sharper than the Choi--Davis--Jensen inequality.
\begin{example}
The function $f(x)=x^{\frac{-1}{2}}$ is operator log-convex on $(0,\infty)$. Assume that the unital  positive linear map $\Phi:\mathcal{M}_3(\mathbb{C})\to\mathcal{M}_2(\mathbb{C})$ is defined by
$$\Phi((a_{ij}))=(a_{ij})_{2\leq i,j\leq3}.$$
If $A\in\mathcal{M}_3(\mathbb{C})$ is the positive matrix
$$A=\left(
\begin{array}{ccc}
2&0&1 \\
0&1 & 1\\
1&1&3
\end{array}\right),$$
then by a simple calculation we have
\begin{align*}
f(\Phi(A))&=\left(
\begin{array}{cc}
1.1945&-0.2706 \\
-0.2706 & 0.6533
\end{array}\right),\qquad \Phi\left(f\left(A\right)^{-1}\right)^{-1}=\left(
\begin{array}{cc}
1.2192   &-0.2933\\
   -0.2933&    0.6760
\end{array}\right)\\
\Phi(f(A))&=\left(
\begin{array}{cc}
    1.2420 &  -0.3261\\
   -0.3261 &   0.7234
\end{array}\right)
\end{align*}
and so
$$f(\Phi(A))\lvertneqq \Phi\left(f\left(A\right)^{-1}\right)^{-1}\lvertneqq \Phi(f(A)).$$
\end{example}
\begin{corollary}
  If $\Phi$ is a unital positive linear map and $A$ is a strictly positive operator, then
 \begin{enumerate}
   \item $\Phi(A)^{-\alpha}\leq\Phi\left(A^{\alpha}\right)^{-1}\leq\Phi\left(A^{-\alpha}\right)$ \quad for all $0\leq \alpha\leq 1$.
   \item $\Phi\left(A^{\alpha}\right)^{\frac{1}{\alpha}}\leq\Phi\left(A^{-1}\right)^{-1}\leq \Phi\left(A\right)$ \quad for all $\alpha\leq -1$.
 \end{enumerate}
\end{corollary}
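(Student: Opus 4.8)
The plan is to deduce both statements from Theorem~\ref{tcdjl}, together with the chain recorded immediately after its proof, by specializing $f$ to a power function and, for the second part, performing a substitution that reduces it to the first part.

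For part (1) I would first check that the function $f(x)=x^{-\alpha}$ is an admissible test function: it maps $(0,\infty)$ into $(0,\infty)$, and for $0\leq\alpha\leq 1$ it is operator decreasing. This is the L\"owner--Heinz inequality in the form $x^{-\alpha}=(x^{-1})^{\alpha}$, where $t\mapsto t^{\alpha}$ is operator monotone for $\alpha\in[0,1]$ and $t\mapsto t^{-1}$ is operator decreasing; being operator decreasing, $f$ is operator log-convex by Theorem A. I would then invoke the chain
\[
f(\Phi(A))\leq\Phi\left(f(A)^{-1}\right)^{-1}\leq\Phi(f(A))
\]
noted after Theorem~\ref{tcdjl}. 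Substituting $f(x)=x^{-\alpha}$ evaluates the three terms as $\Phi(A)^{-\alpha}$, $\Phi(A^{\alpha})^{-1}$, and $\Phi(A^{-\alpha})$ respectively (using $f(A)^{-1}=A^{\alpha}$), which is precisely $\Phi(A)^{-\alpha}\leq\Phi(A^{\alpha})^{-1}\leq\Phi(A^{-\alpha})$.

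For part (2) I would treat the two inequalities separately. The right-hand inequality $\Phi(A^{-1})^{-1}\leq\Phi(A)$ does not involve $\alpha$ and follows from the operator convexity of $x\mapsto x^{-1}$: the Choi--Davis--Jensen inequality \eqref{cdj} gives $\Phi(A)^{-1}\leq\Phi(A^{-1})$, and inverting (which reverses the order) yields the claim. For the left-hand inequality I would reduce to part (1). Setting $\beta=-1/\alpha$, the hypothesis $\alpha\leq -1$ guarantees $\beta\in(0,1]$, so part (1) applies with exponent $\beta$. Applying its left inequality $\Phi(X)^{-\beta}\leq\Phi(X^{\beta})^{-1}$ to the strictly positive operator $X=A^{\alpha}$ gives $\Phi(A^{\alpha})^{-\beta}\leq\Phi((A^{\alpha})^{\beta})^{-1}$. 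Since $-\beta=1/\alpha$ and $(A^{\alpha})^{\beta}=A^{\alpha\beta}=A^{-1}$, this reads $\Phi(A^{\alpha})^{1/\alpha}\leq\Phi(A^{-1})^{-1}$, as required.

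I do not expect a genuine obstacle, since the corollary is a direct specialization of Theorem~\ref{tcdjl}. The only point demanding care is the bookkeeping of exponents together with the order-reversing behaviour of inversion and of negative fractional powers: one must verify that $0\leq\alpha\leq 1$ is exactly the range making $x^{-\alpha}$ operator log-convex, that $\beta=-1/\alpha$ lands in $(0,1]$ precisely when $\alpha\leq -1$, and that the composite exponent satisfies $\alpha\beta=-1$, so that the substitution $X=A^{\alpha}$ closes up to reproduce $\Phi(A^{-1})^{-1}$ on the right.
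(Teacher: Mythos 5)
Your proof is correct and follows essentially the same route as the paper: part (1) is Theorem~\ref{tcdjl} (together with the chain noted after it) applied to the operator log-convex function $x^{-\alpha}$, and your reduction of part (2) to part (1) via $\beta=-1/\alpha$ and $X=A^{\alpha}$ is exactly the paper's computation, which applies Theorem~\ref{tcdjl} to $t^{1/\alpha}=t^{-\beta}$ and then replaces $A$ by $A^{\alpha}$. The only (welcome) difference is that you spell out the right-hand inequality of (2) via the Choi--Davis--Jensen inequality for $x^{-1}$, a detail the paper leaves implicit.
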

\begin{proof}
$(1)$: follows from the operator log-convexity of $t^{-\alpha}$.\\
$(2)$: the function $t\to t^{\frac{1}{\alpha}}$ is   operator log-convex. So it follows from Theorem \ref{tcdjl} that $\Phi(A)^{\frac{1}{\alpha}}\leq \left(\Phi\left(A^{\frac{-1}{\alpha}}\right)\right)^{-1}$. Replacing $A$ by $A^{\alpha}$ we get desired inequality.
\end{proof}

\begin{corollary}
 Let $\Phi_1,\cdots,\Phi_n$ be positive linear mappings on $\mathbb{B}(\mathscr{H})$ such that $\sum_{i=1}^{n}\Phi_i(I)=I$.  If  $f:(0,\infty)\to(0,\infty)$ is an  operator log-convex function, then
 $$f\left(\sum_{i=1}^{n}\Phi_i(A_i)\right)\leq\left(\sum_{i=1}^{n}\Phi_i\left(f(A_i)^{-1}\right)\right)^{-1}$$
 for all strictly positive operators $A_1,\cdots,A_n$.
\end{corollary}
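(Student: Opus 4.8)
The plan is to reduce the many-map inequality to the single-map inequality of Theorem \ref{tcdjl} by assembling the data into one block-diagonal operator and one unital positive linear map on an amplified algebra. This mirrors the way Theorem \ref{tcdjl} itself is deduced from the single compression estimate, and it parallels the passage from Lemma \ref{jl} to Corollary \ref{co11}.

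First I would form the strictly positive operator $A=A_1\oplus\cdots\oplus A_n$ on $\mathscr{H}^{(n)}=\mathscr{H}\oplus\cdots\oplus\mathscr{H}$ ($n$ copies), and define a map $\Psi$ on $\mathbb{B}(\mathscr{H}^{(n)})$ by sending a block operator matrix $X=(X_{ij})_{i,j=1}^{n}$ to $\Psi(X)=\sum_{i=1}^{n}\Phi_i(X_{ii})$. Each diagonal compression $X\mapsto X_{ii}$ is a positive linear map, so $\Psi$ is a finite sum of compositions of positive linear maps and is therefore positive and linear. Moreover $\Psi(I)=\sum_{i=1}^{n}\Phi_i(I)=I$ by hypothesis, so $\Psi$ is unital. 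Thus Theorem \ref{tcdjl} applies to the pair $\Psi$ and $A$ (the theorem and the worked example already allow positive linear maps between distinct algebras).

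Next I would apply the functional calculus to the block-diagonal operator. Since $A$ is block diagonal, so is $f(A)$, with $f(A)=f(A_1)\oplus\cdots\oplus f(A_n)$, whence $f(A)^{-1}=f(A_1)^{-1}\oplus\cdots\oplus f(A_n)^{-1}$. Evaluating $\Psi$ then gives $\Psi(A)=\sum_{i=1}^{n}\Phi_i(A_i)$ and $\Psi(f(A)^{-1})=\sum_{i=1}^{n}\Phi_i(f(A_i)^{-1})$. Substituting these into the conclusion $f(\Psi(A))\leq\Psi(f(A)^{-1})^{-1}$ of Theorem \ref{tcdjl} yields exactly the claimed inequality.

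The argument is essentially routine and no step presents a serious obstacle; the only points requiring care are the verification that $\Psi$ is genuinely a unital positive linear map (which follows because diagonal compressions are positive and the hypothesis $\sum_{i=1}^{n}\Phi_i(I)=I$ supplies unitality) and the observation that the right-hand side is well defined, i.e. that $\Psi(f(A)^{-1})$ is invertible. The latter holds because each $f(A_i)$ is a bounded strictly positive operator, so $f(A)^{-1}\geq\|f(A)\|^{-1}I$ and hence $\Psi(f(A)^{-1})\geq\|f(A)\|^{-1}I>0$.
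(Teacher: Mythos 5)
Your proof is correct and is essentially the paper's own argument: the paper also applies Theorem \ref{tcdjl} to the block-diagonal operator $A=A_1\oplus\cdots\oplus A_n$ and the unital positive linear map $\Phi(A_1\oplus\cdots\oplus A_n)=\sum_{i=1}^{n}\Phi_i(A_i)$ on $\mathbb{B}(\mathscr{H}\oplus\cdots\oplus\mathscr{H})$. Your version is slightly more careful than the paper's one-line proof, since you extend the map to the whole algebra via the diagonal compressions $X\mapsto X_{ii}$ (making positivity and unitality transparent) and you check that $\Psi\left(f(A)^{-1}\right)$ is invertible, details the paper leaves implicit.
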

\begin{proof}
 Apply  Theorem \ref{tcdjl} to the strictly positive operator $A=A_1\oplus\cdots\oplus A_n$ and the unital positive linear map $\Phi:\mathbb{B}(\mathscr{H}\oplus\cdots \oplus\mathscr{H})\to\mathbb{B}(\mathscr{H})$ defined  by $\Phi(A_1\oplus\cdots\oplus A_n)=\sum_{i=1}^{n}\Phi_i(A_i)$.
\end{proof}

The next result shows that every operator log-convex function is sub-additive.
\begin{proposition}
  If $f:(0,\infty)\to(0,\infty)$ is an operator log-convex function, then $f$ is sub-additive. More precisely
  \begin{align*}
    f(A+B)\leq f(A)\sharp f(B)\leq f(A)+f(B)
  \end{align*}
  for all  strictly positive   operators $A,B$.
\end{proposition}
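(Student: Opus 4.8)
The plan is to combine the defining inequality of operator log-convexity with the elementary order relation between the geometric and arithmetic means, using also the fact (contained in Theorem A) that an operator log-convex function is automatically operator decreasing. The two displayed inequalities are handled separately, and only the left one needs the structure of $f$.

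First I would dispose of the right-hand inequality, which is purely a statement about operator means and uses nothing about $f$ beyond its positivity. Since the geometric mean is dominated by the arithmetic mean, one has $f(A)\sharp f(B)\leq f(A)\nabla f(B)=\frac{f(A)+f(B)}{2}$, and because $f(A),f(B)$ are strictly positive the right-hand side is in turn $\leq f(A)+f(B)$. This yields $f(A)\sharp f(B)\leq f(A)+f(B)$ at once.

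For the left-hand inequality I would exploit monotonicity to bridge the gap between the midpoint $\frac{A+B}{2}$ appearing in the definition and the full sum $A+B$. By Theorem A the operator log-convexity of $f$ is equivalent to $f$ being operator decreasing. Since $A,B>0$ we have $A\nabla B=\frac{A+B}{2}\leq A+B$, so the operator decreasing property gives $f(A+B)\leq f\!\left(\frac{A+B}{2}\right)=f(A\nabla B)$. The definition of operator log-convexity then supplies $f(A\nabla B)\leq f(A)\sharp f(B)$, and chaining these two estimates produces $f(A+B)\leq f(A)\sharp f(B)$.

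The argument is short and I do not anticipate a genuine obstacle; the only point requiring care is the appeal to Theorem A to pass from log-convexity to the operator decreasing property, which is precisely what legitimizes replacing the midpoint $\frac{A+B}{2}$ by the larger operator $A+B$. Everything else reduces to the standard ordering $X\sharp Y\leq X\nabla Y$ of Kubo--Ando means and the positivity of the values of $f$.
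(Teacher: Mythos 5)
Your proof is correct and takes essentially the same route as the paper: both arguments combine the operator-decreasing property from Theorem A, the defining log-convexity inequality, and the arithmetic--geometric mean inequality for operator means. The only cosmetic difference is the order of the first two steps --- the paper writes $f(A+B)=f((2A)\nabla(2B))\leq f(2A)\sharp f(2B)\leq f(A)\sharp f(B)$, applying log-convexity first and then the decreasing property together with monotonicity of $\sharp$, whereas you apply the decreasing property first to pass from $A+B$ down to $A\nabla B$ and then invoke log-convexity.
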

\begin{proof}
  Assume that $A$ and $B$ are strictly positive operators. Then
  \begin{align*}
    f(A+B)=f((2A)\nabla(2B))&\leq f(2A)\sharp f(2B)\\
    & \leq f(A)\sharp f(B)\ \qquad (\mbox{by (1) of Theorem A}) \\
    &\leq f(A)\nabla f(B)\qquad (\mbox{by the A-G inequality})\\
    &\leq f(A) + f(B).
  \end{align*}
\end{proof}

\section{the non-commutative $f$-divergence functional}
Let $X_1,\cdots,X_n$ and $Y_1,\cdots,Y_n$ be $n$-tuples of positive operators on $\mathscr{H}$. If follows from the jointly operator concavity of the operator geometric mean that
\begin{align}\label{CS}
\sum_{i=1}^{n}X_i\sharp Y_i\leq \left(\sum_{i=1}^{n}X_i\right)\sharp \left(\sum_{i=1}^{n}Y_i\right).
\end{align}
This inequality is known as the   operator version of the Cauchy--Schwarz inequality.

To achieve our result, we need a more general version of \eqref{CS}.  Assume that $(A_t)_{t\in T}$ and $(B_t)_{t\in T}$ are continuous fields of strictly positive operators in $\mathfrak{A}$.  We can generalize \eqref{CS} as
$$ \int_T (A_t\sharp B_t )d\mu(t)\leq \left(\int_T A_t d\mu(t)\right)\sharp \left(\int_T B_td\mu(t)\right).$$

\begin{lemma}\label{CSI}
If  $(A_t)_{t\in T}$ and $(B_t)_{t\in T}$ are  continuous fields of strictly positive operators in $\mathfrak{A}$, then
$$ \int_T (A_t\sharp B_t )d\mu(t)\leq \left(\int_T A_t d\mu(t)\right)\sharp \left(\int_T B_td\mu(t)\right).$$
\end{lemma}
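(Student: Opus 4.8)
The plan is to deduce the integral inequality from the variational (maximality) characterization of the geometric mean, which converts the whole statement into a single positivity assertion about a block operator and then lets the Bochner integral do the work. The fact I would rely on is that for positive operators $C$ and $D$ on $\mathscr{H}$ the geometric mean admits the description
\begin{align*}
C\sharp D=\max\left\{X=X^*:\ \begin{pmatrix} C & X \\ X & D\end{pmatrix}\geq0\right\},
\end{align*}
so that $X\leq C\sharp D$ for every self-adjoint $X$ making the displayed $2\times2$ block operator positive, with equality attained at $X=C\sharp D$. This is the standard Ando characterization (obtained from the Schur complement $D\geq XC^{-1}X$ when $C$ is strictly positive).

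First I would work in the $C^*$-algebra $M_2(\mathfrak{A})\cong\mathfrak{A}\otimes M_2(\mathbb{C})$ and record that, for each $t\in T$, the choice $X=A_t\sharp B_t$ gives
\begin{align*}
M_t:=\begin{pmatrix} A_t & A_t\sharp B_t \\ A_t\sharp B_t & B_t\end{pmatrix}\geq0.
\end{align*}
Since $t\mapsto A_t$ and $t\mapsto B_t$ are norm continuous and the map $(A,B)\mapsto A\sharp B$ is norm continuous on strictly positive operators, $(M_t)_{t\in T}$ is a continuous field of positive operators in $M_2(\mathfrak{A})$, hence Bochner integrable on $T$.

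Next I would integrate. The Bochner integral of a positive continuous field is positive: applying the defining identity of the integral to the functional $\rho(S)=\langle Sx,x\rangle$ yields $\langle(\int_T M_t\,d\mu(t))x,x\rangle=\int_T\langle M_tx,x\rangle\,d\mu(t)\geq0$ for every vector $x\in\mathscr{H}\oplus\mathscr{H}$, so $\int_T M_t\,d\mu(t)\geq0$. Because integration acts entrywise on the block structure, this says exactly that
\begin{align*}
\begin{pmatrix} \int_T A_t\,d\mu(t) & \int_T(A_t\sharp B_t)\,d\mu(t) \\ \int_T(A_t\sharp B_t)\,d\mu(t) & \int_T B_t\,d\mu(t)\end{pmatrix}\geq0.
\end{align*}
Invoking the maximality characterization once more, now with $C=\int_T A_t\,d\mu(t)$, $D=\int_T B_t\,d\mu(t)$ and $X=\int_T(A_t\sharp B_t)\,d\mu(t)$, gives precisely the asserted inequality.

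The hard part will be purely technical rather than conceptual: one must justify that the Bochner integral preserves positivity and commutes with the block decomposition, and that the integrated diagonal entries are again admissible arguments for $\sharp$. The positivity and entrywise statements follow from the functional-defining property of the integral as above; the possible loss of strict positivity of the integrals is harmless, since the geometric mean and its maximality characterization extend to all positive operators by the Kubo--Ando continuity axiom. An alternative route, staying entirely within the tools already developed, is to approximate the Bochner integral by Riemann sums $\sum_i \mu(E_i)A_{t_i}$, apply the finite inequality \eqref{CS} after absorbing the positive scalars via $\mu(E_i)(A\sharp B)=(\mu(E_i)A)\sharp(\mu(E_i)B)$, and pass to the limit using norm continuity of $\sharp$; there the delicate point is the uniform control needed to interchange the limit with the geometric mean.
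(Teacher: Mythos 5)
Your proof is correct, but it follows a genuinely different route from the paper's. The paper proves the lemma by writing $B^{-1/2}AB^{-1/2}$ (with $A=\int_T A_t\,d\mu(t)$, $B=\int_T B_t\,d\mu(t)$) as $\int_T C_t^*\bigl(B_t^{-1/2}A_tB_t^{-1/2}\bigr)C_t\,d\mu(t)$ where $C_t=B_t^{1/2}B^{-1/2}$ satisfies $\int_T C_t^*C_t\,d\mu(t)=I$, and then invokes the integral form of the Jensen operator inequality for the operator concave function $x^{1/2}$; the key external input is thus the Hansen--Pedersen--Pe\v{c}ari\'c Jensen machinery, and the argument leans on invertibility of the $B_t$ and of $B$. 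You instead use Ando's maximality characterization $C\sharp D=\max\bigl\{X=X^*:\ \bigl(\begin{smallmatrix} C & X\\ X & D\end{smallmatrix}\bigr)\geq0\bigr\}$, which converts the inequality into the single observation that positivity of the block operators $\bigl(\begin{smallmatrix} A_t & A_t\sharp B_t\\ A_t\sharp B_t & B_t\end{smallmatrix}\bigr)$ survives Bochner integration (integration being linear, entrywise on blocks, and positivity-preserving via vector states). Your route buys several things: it avoids the Jensen theorem entirely, replacing it with the elementary Schur-complement argument plus operator monotonicity of the square root; it handles non-invertible limits gracefully (as you note, the characterization extends to all positive operators); and it visibly generalizes to the statement $\Phi(A\sharp B)\leq\Phi(A)\sharp\Phi(B)$ for an arbitrary positive linear map $\Phi$, of which integration of fields is a special case. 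What the paper's route buys is internal economy: it reuses exactly the Jensen-type toolkit that the rest of the paper is built on, so no block-matrix characterization of $\sharp$ needs to be imported. The technical caveats you flag (continuity and integrability of $t\mapsto A_t\sharp B_t$, entrywise action of the integral) are genuine but routine, and the first of them is equally implicit in the paper's statement of the lemma.
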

\begin{proof}
  Put $A=\int_TA_td\mu(t)$ and $B=\int_TB_td\mu(t)$. we have
  \begin{align}\label{11}
    \left(B^{-\frac{1}{2}} A B^{-\frac{1}{2}}\right)^{\frac{1}{2}}
    &=\left(\left(\int_TB_sd\mu(s)\right)^{-\frac{1}{2}} \int_TA_td\mu(t) \left(\int_TB_sd\mu(s)\right)^{-\frac{1}{2}}\right)^{\frac{1}{2}}\nonumber\\
    &=\left( \int_T\left(\int_TB_sd\mu(s)\right)^{-\frac{1}{2}}B_t^{\frac{1}{2}}(B_t^{-\frac{1}{2}}A_t B_t^{-\frac{1}{2}})B_t^{\frac{1}{2}} \left(\int_TB_sd\mu(s)\right)^{-\frac{1}{2}}d\mu(t)\right)^{\frac{1}{2}}\nonumber\\
    &= \left( \int_T C_t^*(B_t^{-\frac{1}{2}}A_t B_t^{-\frac{1}{2}})C_td\mu(t)\right)^{\frac{1}{2}},
     \end{align}
      where $C=B_t^{\frac{1}{2}}\left(\int_TB_sd\mu(s)\right)^{-\frac{1}{2}}$  so that $\int_T C_t^*C_t d\mu(t)=I$. It follows from the operator concavity of the function $t^{\frac{1}{2}}$ that
     \begin{align}\label{22}
     &\left( \int_T C_t^*(B_t^{-\frac{1}{2}}A_t B_t^{-\frac{1}{2}})C_td\mu(t)\right)^{\frac{1}{2}}\nonumber\\
         &\geq \int_T C_t^*\left(B_t^{-\frac{1}{2}}A_t B_t^{-\frac{1}{2}}\right)^{\frac{1}{2}}C_td\mu(t) \qquad (\mbox{by the operator Jensen inequality })\nonumber\\
    &=  \left(\int_TB_sd\mu(s)\right)^{-\frac{1}{2}}\int_T B_t^{\frac{1}{2}}\left(B_t^{-\frac{1}{2}}A_t B_t^{-\frac{1}{2}}\right)^{\frac{1}{2}}B_t^{\frac{1}{2}}d\mu(t) \left(\int_TB_sd\mu(s)\right)^{-\frac{1}{2}}.
  \end{align}
It follows that from \eqref{11} and \eqref{22} that
\begin{align*}
  \left(B^{-\frac{1}{2}} A B^{-\frac{1}{2}}\right)^{\frac{1}{2}}\geq
  B^{-\frac{1}{2}}\left(\int_T B_t^{\frac{1}{2}}\left(B_t^{-\frac{1}{2}}A_t B_t^{-\frac{1}{2}}\right)^{\frac{1}{2}}B_t^{\frac{1}{2}}d\mu(t)\right) B^{-\frac{1}{2}}
\end{align*}
from which we get the desired result.
\end{proof}
Now we present a property of the non-commutative $f$-divergence functional of an operator log-convex function.
\begin{theorem}\label{main}
   A continuous  function $f:(0,\infty)\to(0,\infty)$ is operator log-convex function if and only if
   \begin{align}\label{00}
  \Theta\left( \widetilde{A}\nabla\widetilde{C},\widetilde{B}\nabla\widetilde{D}\right)\leq \left(\Theta\left( \widetilde{A},\widetilde{B}\right)\nabla \Theta\left( \widetilde{A},\widetilde{D}\right)\right)\sharp \left(\Theta\left( \widetilde{C},\widetilde{B}\right)\nabla \Theta\left( \widetilde{C},\widetilde{D}\right)\right)
\end{align}
for all continuous fields $\widetilde{A}=(A_t)_{t\in T},\widetilde{B}=(B_t)_{t\in T}, \widetilde{C}=(C_t)_{t\in T}$ and $\widetilde{D}=(D_t)_{t\in T}$ of strictly positive operators in $\mathfrak{A}$.
\end{theorem}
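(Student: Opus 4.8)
The plan is to split the claimed equivalence into the two one-variable properties foreshadowed in the abstract---operator log-convexity of $\Theta$ in its first slot and operator convexity in its second slot---and then recombine them using the monotonicity of the geometric mean. For the forward implication, assume $f$ is operator log-convex and establish first the log-convexity in the first variable. Fixing a field $\widetilde{B}$, I would set $X_t=B_t^{-1/2}A_tB_t^{-1/2}$ and $Y_t=B_t^{-1/2}C_tB_t^{-1/2}$, both strictly positive, and note $B_t^{-1/2}(A_t\nabla C_t)B_t^{-1/2}=X_t\nabla Y_t$. Operator log-convexity gives the pointwise bound $f(X_t\nabla Y_t)\leq f(X_t)\sharp f(Y_t)$, and congruence by the invertible $B_t^{1/2}$, using the equality case of the transformer inequality, upgrades this to $B_t^{1/2}f(X_t\nabla Y_t)B_t^{1/2}\leq(B_t^{1/2}f(X_t)B_t^{1/2})\sharp(B_t^{1/2}f(Y_t)B_t^{1/2})$. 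Integrating over $T$ and applying the integral Cauchy--Schwarz inequality of Lemma \ref{CSI} then yields $\Theta(\widetilde{A}\nabla\widetilde{C},\widetilde{B})\leq\Theta(\widetilde{A},\widetilde{B})\sharp\Theta(\widetilde{C},\widetilde{B})$.

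For the second building block, I would observe that the operator AM--GM inequality $f(A)\sharp f(B)\leq f(A)\nabla f(B)$ combined with log-convexity forces $f(A\nabla B)\leq f(A)\nabla f(B)$, so $f$ is operator convex and its perspective $g(A,B)=B^{1/2}f(B^{-1/2}AB^{-1/2})B^{1/2}$ is jointly operator convex \cite{G,E}. Evaluating joint convexity with equal first arguments gives the pointwise estimate $g(A_t,B_t\nabla D_t)\leq g(A_t,B_t)\nabla g(A_t,D_t)$, and integrating---with the arithmetic mean commuting with the Bochner integral---produces $\Theta(\widetilde{A},\widetilde{B}\nabla\widetilde{D})\leq\Theta(\widetilde{A},\widetilde{B})\nabla\Theta(\widetilde{A},\widetilde{D})$. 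To finish the forward direction I would apply the first block to $\widetilde{A}\nabla\widetilde{C}$ against the field $\widetilde{B}\nabla\widetilde{D}$, then apply the second block to each of the two resulting factors, and use that $\sharp$ is monotone to majorize the geometric mean; chaining these inequalities gives exactly \eqref{00}.

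For the converse, I would specialize $\Theta$ by taking $T$ to be a single point carrying a measure of total mass one, so that $\Theta$ reduces to the perspective $g$. Choosing $\widetilde{B}=\widetilde{D}=I$ gives $\widetilde{B}\nabla\widetilde{D}=I$ and $\Theta(\,\cdot\,,I)=f(\,\cdot\,)$, whereupon the right-hand side of \eqref{00} collapses to $(f(A)\nabla f(A))\sharp(f(C)\nabla f(C))=f(A)\sharp f(C)$ and the left-hand side to $f(A\nabla C)$. Thus \eqref{00} reduces precisely to $f(A\nabla C)\leq f(A)\sharp f(C)$, i.e. operator log-convexity of $f$, completing the equivalence.

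The main obstacle is the first building block: one must transport the pointwise log-convexity bound through the congruence by $B_t^{1/2}$ (where invertibility is essential, since the transformer inequality holds with equality only for invertible congruences) and then interchange the geometric mean with the Bochner integral, which is exactly the content supplied by Lemma \ref{CSI}. The second-variable convexity and the final recombination are routine once the joint operator convexity of the perspective and the monotonicity of $\sharp$ are in hand.
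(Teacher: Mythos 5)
Your proposal is correct and follows essentially the same route as the paper: first-variable log-convexity via the pointwise bound, congruence by $B_t^{1/2}$, and Lemma \ref{CSI}; then second-variable convexity combined with monotonicity of $\sharp$; and the converse by specializing to a singleton $T$ with $\widetilde{B}=\widetilde{D}=I$. The only cosmetic difference is that you derive the second-variable convexity from the joint operator convexity of the perspective $g$ (integrating pointwise), whereas the paper cites the joint operator convexity of $\Theta$ itself from \cite{MK} --- the same fact in integrated form.
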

\begin{proof}
First we show that if $f$ is  operator log-convex  on $(0,\infty)$, then $\Theta$ is  operator log-convex in its first variable. Assume that $\widetilde{A}=(A_t)_{t\in T},\widetilde{B}=(B_t)_{t\in T}, \widetilde{C}=(C_t)_{t\in T}$ and $\widetilde{D}=(D_t)_{t\in T}$ are continuous fields of strictly positive  operators in $\mathfrak{A}$. For every $t\in T$
\begin{align}\label{1}
  f\left(B_{t}^{-\frac{1}{2}} (A_{t}\nabla C_{t})B_{t}^{-\frac{1}{2}}\right)&=f\left(\left(B_{t}^{-\frac{1}{2}}A_{t}B_{t}^{-\frac{1}{2}} \right)\nabla \left(B_{t}^{-\frac{1}{2}} C_{t}B_{t}^{-\frac{1}{2}}\right)\right)\nonumber\\
  &\leq f\left(B_{t}^{-\frac{1}{2}}A_{t}B_{t}^{-\frac{1}{2}} \right)\sharp  f\left(B_{t}^{-\frac{1}{2}}C_{t}B_{t}^{-\frac{1}{2}} \right),
\end{align}
where we use the operator log-convexity of $f$. Multiplying both sides of \eqref{1} by $B_{t}^{\frac{1}{2}}$ we get
\begin{align}\label{2}
  B_{t}^{\frac{1}{2}}f\left(B_{t}^{-\frac{1}{2}} (A_{t}\nabla C_{t})B_{t}^{-\frac{1}{2}}\right)B_{t}^{\frac{1}{2}}
  &\leq B_{t}^{\frac{1}{2}}\left(f\left(B_{t}^{-\frac{1}{2}}A_{t}B_{t}^{-\frac{1}{2}} \right)\sharp  f\left(B_{t}^{-\frac{1}{2}}C_{t}B_{t}^{-\frac{1}{2}} \right)\right)B_{t}^{\frac{1}{2}}\\
  &= B_{t}^{\frac{1}{2}}f\left(B_{t}^{-\frac{1}{2}}A_{t}B_{t}^{-\frac{1}{2}} \right) B_{t}^{\frac{1}{2}}\sharp  B_{t}^{\frac{1}{2}}f\left(B_{t}^{-\frac{1}{2}}C_{t}B_{t}^{-\frac{1}{2}} \right)B_{t}^{\frac{1}{2}}.\nonumber
\end{align}
The last equality follows from the property of (geometric)  means.
Integrating \eqref{2} over $T$ and using Lemma \ref{CSI} we obtain
\begin{align*}
  \int_T &B_{t}^{\frac{1}{2}}f\left(B_{t}^{-\frac{1}{2}} (A_{t}\nabla C_{t})B_{t}^{-\frac{1}{2}}\right)B_{t}^{\frac{1}{2}}d\mu(t)\nonumber\\
    &\leq  \int_T \left(B_{t}^{\frac{1}{2}}f\left(B_{t}^{-\frac{1}{2}}A_{t}B_{t}^{-\frac{1}{2}} \right) B_{t}^{\frac{1}{2}}\sharp  B_{t}^{\frac{1}{2}}f\left(B_{t}^{-\frac{1}{2}}C_{t}B_{t}^{-\frac{1}{2}} \right)B_{t}^{\frac{1}{2}}\right)d\mu(t)\quad\qquad(\mbox{by \eqref{2}})\nonumber\\
    &\leq \left( \int_T B_{t}^{\frac{1}{2}}f\left(B_{t}^{-\frac{1}{2}}A_{t}B_{t}^{-\frac{1}{2}} \right) B_{t}^{\frac{1}{2}}d\mu(t) \right)\sharp  \left( \int_T B_{t}^{\frac{1}{2}}f\left(B_{t}^{-\frac{1}{2}}C_{t}B_{t}^{-\frac{1}{2}} \right)B_{t}^{\frac{1}{2}}d\mu(t)\right),
    \end{align*}
    i.e.,
\begin{align}\label{44}
\Theta\left( \widetilde{A}\nabla\widetilde{C},\widetilde{B}\right)\leq\Theta\left( \widetilde{A},\widetilde{B}\right)\sharp\Theta\left(\widetilde{C},\widetilde{B}\right).
\end{align}
Therefore
\begin{align*}
  \Theta\left( \widetilde{A}\nabla\widetilde{C},\widetilde{B}\nabla\widetilde{D}\right)&\leq \Theta\left( \widetilde{A},\widetilde{B}\nabla\widetilde{D}\right)\sharp\Theta\left(\widetilde{C},\widetilde{B}\nabla\widetilde{D}
  \right)\qquad(\mbox{by \eqref{44}})\\
  &\leq \left(\Theta\left( \widetilde{A},\widetilde{B}\right)\nabla \Theta\left( \widetilde{A},\widetilde{D}\right)\right)\sharp \left(\Theta\left( \widetilde{C},\widetilde{B}\right)\nabla \Theta\left( \widetilde{C},\widetilde{D}\right)\right).
\end{align*}
The last inequality follows from the joint operator convexity of $\Theta$ \cite{MK} and  monotonicity   of operator means.

Assume for the converse that $\Theta$  satisfies \eqref{00}.  Let $T=\{1\}$ and
$\mu$ be the counting measure on $T$. Let $A$ and $C$ be strictly positive operators in $\mathfrak{A}$. Then
$$f(A\nabla C)=\Theta(A\nabla C, I)\leq \Theta(A,I)\sharp \Theta(C,I)=f(A)\sharp f(C),$$
which means that $f$ is operator log-convex.
\end{proof}

\begin{remark}
  Infact  Theorem \ref{main} assert that $f$ is operator log-convex if and only if the non-commutative $f$-divergence functional $\Theta$ is operator log-convex in its first variable and operator convex in its second variable.
\end{remark}
\begin{corollary}\label{co1}
A continuous non-negative function $f$ is operator log-convex function if and only if the associated perspective function $g$ is operator log-convex function in its first variable and operator convex in its second variable.
\end{corollary}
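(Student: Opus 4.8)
The plan is to recognize the perspective function $g$ as the degenerate case of the non-commutative $f$-divergence functional $\Theta$ in which the index space consists of a single point. Taking $T=\{1\}$ equipped with the counting measure, the Bochner integral collapses to a single summand, so that for the constant fields $\widetilde{A}=(A)$ and $\widetilde{B}=(B)$ one has
$$\Theta(\widetilde{A},\widetilde{B})=B^{\frac{1}{2}}f\left(B^{-\frac{1}{2}}AB^{-\frac{1}{2}}\right)B^{\frac{1}{2}}=g(A,B).$$
Under this identification every inequality for $\Theta$ specializes verbatim to one for $g$, and the properties ``operator log-convex in the first variable'' and ``operator convex in the second variable'' for $\Theta$ become the corresponding properties for $g$. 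Hence the corollary should follow by reading Theorem \ref{main}, together with the remark immediately after it, in this single-point case. (Here $g$ is genuinely defined, since by Theorem A an operator log-convex $f$ is operator decreasing, hence operator convex.)

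For the forward implication I would assume $f$ is operator log-convex. By the remark following Theorem \ref{main}, the functional $\Theta$ is then operator log-convex in its first variable and operator convex in its second variable; specializing to $T=\{1\}$ via the identification above immediately transfers both properties to $g$. For the converse I would assume that $g$ is operator log-convex in its first variable and operator convex in its second variable. The key observation is that $g(A,I)=f(A)$ for every strictly positive $A$. Thus operator log-convexity of $g$ in its first variable, evaluated at $B=I$, reads
$$f(A\nabla C)=g(A\nabla C,I)\leq g(A,I)\sharp g(C,I)=f(A)\sharp f(C)$$
for all strictly positive $A,C$, which is exactly the definition of operator log-convexity of $f$; note that the second-variable hypothesis is not even needed for this direction.

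I do not anticipate a genuine obstacle, as the statement is essentially a restatement of Theorem \ref{main} in the one-point case. The only point deserving care is to confirm that the reduction $\Theta=g$ at $T=\{1\}$ transports the two \emph{separate} properties—operator log-convexity in the first argument and operator convexity in the second—rather than merely the combined inequality \eqref{00}. This separation is precisely what the remark following Theorem \ref{main} records, so invoking it settles the matter without further work.
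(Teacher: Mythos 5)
Your proposal is correct and matches the paper's intended argument: the paper gives no separate proof of Corollary \ref{co1} precisely because it is the one-point specialization ($T=\{1\}$ with counting measure, so $\Theta=g$) of Theorem \ref{main} and the remark following it, which is exactly your reduction. Your direct converse via $f(A\nabla C)=g(A\nabla C,I)\leq g(A,I)\sharp g(C,I)=f(A)\sharp f(C)$ is also the same computation the paper uses for the converse inside the proof of Theorem \ref{main}.
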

The next theorem provides a Choi--Davis--Jensen type inequality for perspectives of operator log-convex functions.
\begin{theorem}\label{th3}
 Let  $f:(0,\infty)\to(0,\infty)$  be a continuous  function and $g$ be the associated perspective function. Then $f$ is operator log-convex  if and only if
{\small \begin{align}\label{33}
g\left(\int_T\Phi_t(A_t\nabla C_t)d\mu (t),\int_T\Phi_t(B_t)d\mu(t)\right)\leq
 \left(\int_T\Phi_t(g(A_t,B_t))d\mu(t)\right)\sharp\left(\int_T\Phi_t(g(C_t,B_t))d\mu(t)\right)
 \end{align}}
 for all unital fields $\widetilde{\Phi}=(\Phi_t)_{t\in T}:\mathfrak{A}\to\mathfrak{B}$  of  positive linear maps and all  continuous fields $\widetilde{A}=(A_t)_{t\in T}, \widetilde{B}=(B_t)_{t\in T}$  and $\widetilde{C}=(C_t)_{t\in T}$ of strictly positive operators in $\mathfrak{A}$.
\end{theorem}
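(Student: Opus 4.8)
The plan is to prove the two implications separately; the forward direction (operator log-convexity of $f$ implies \eqref{33}) carries essentially all of the work, while the converse is an immediate specialization. First I would record that an operator log-convex $f$ is automatically operator convex, since $f(A\nabla B)\le f(A)\sharp f(B)\le f(A)\nabla f(B)$ exhibits $f$ as midpoint operator convex, hence operator convex by continuity. Consequently the associated perspective $g$ is jointly operator convex (Corollary \ref{co1}, \cite{MK,E}).

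The key first step is the operator Jensen inequality for $g$ relative to the unital field $(\Phi_t)_{t\in T}$. Applying it to the fields $(A_t\nabla C_t)$ and $(B_t)$ should give
\begin{align*}
g\left(\int_T\Phi_t(A_t\nabla C_t)\,d\mu(t),\ \int_T\Phi_t(B_t)\,d\mu(t)\right)\le \int_T\Phi_t\big(g(A_t\nabla C_t,B_t)\big)\,d\mu(t).
\end{align*}
This is the field version of the Hansen--Pedersen--Jensen inequality (\cite{HPP}) applied to the jointly operator convex $g$, and I expect it to be the main obstacle: it is the only place where the positive maps and the perspective structure interact, and one must deduce it from the joint operator convexity of $\Theta$ established in \cite{MK} (reducing the two-variable statement to the single-variable field inequality).

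Next I would invoke the log-convexity of $g$ in its first variable at the pointwise level. By Corollary \ref{co1} (equivalently, the single-point case of \eqref{44}) one has $g(A_t\nabla C_t,B_t)\le g(A_t,B_t)\sharp g(C_t,B_t)$ for each $t$. Applying the positive map $\Phi_t$ and using Ando's inequality $\Phi_t(X\sharp Y)\le \Phi_t(X)\sharp \Phi_t(Y)$, valid for every positive linear map via the block-matrix (maximality) characterization of the geometric mean, yields
\begin{align*}
\Phi_t\big(g(A_t\nabla C_t,B_t)\big)\le \Phi_t\big(g(A_t,B_t)\big)\sharp \Phi_t\big(g(C_t,B_t)\big).
\end{align*}
Integrating over $T$ and applying the Cauchy--Schwarz type inequality of Lemma \ref{CSI} then produces
\begin{align*}
\int_T\Phi_t\big(g(A_t,B_t)\big)\sharp\Phi_t\big(g(C_t,B_t)\big)\,d\mu(t)\le\left(\int_T\Phi_t(g(A_t,B_t))\,d\mu(t)\right)\sharp\left(\int_T\Phi_t(g(C_t,B_t))\,d\mu(t)\right).
\end{align*}
Chaining the three displays gives \eqref{33}. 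A minor technical point is that Lemma \ref{CSI} is stated for strictly positive fields, whereas $\Phi_t(g(\cdot,\cdot))$ need only be positive; I would handle this by the usual $\varepsilon I$ perturbation together with a continuity argument, since the geometric mean extends continuously to all positive operators.

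For the converse I would specialize to $T=\{1\}$ with the counting measure, $\Phi_1=\mathrm{id}$ and $B_1=I$. Since $g(X,I)=f(X)$, inequality \eqref{33} collapses to $f(A_1\nabla C_1)\le f(A_1)\sharp f(C_1)$ for all strictly positive $A_1,C_1$, which is exactly the operator log-convexity of $f$. This mirrors the converse argument used in Theorem \ref{main}.
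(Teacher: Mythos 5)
Your argument matches the paper's proof everywhere except at the step you yourself flag as the main obstacle, and that step is left unproved --- this is the one genuine gap. The pointwise inequality $g(A_t\nabla C_t,B_t)\le g(A_t,B_t)\sharp g(C_t,B_t)$ from Corollary \ref{co1}, the application of $\Phi_t$ via Ando's inequality $\Phi_t(X\sharp Y)\le \Phi_t(X)\sharp\Phi_t(Y)$, the passage from the integral of geometric means to the geometric mean of integrals via Lemma \ref{CSI}, and the converse with $T=\{1\}$, $\Phi=\mathrm{id}$, $B=I$ are exactly the paper's steps (and your $\varepsilon I$ remark about applying Lemma \ref{CSI} to merely positive integrands is a fair technical point which the paper glosses over). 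But your opening display,
\[
g\left(\int_T\Phi_t(A_t\nabla C_t)\,d\mu(t),\ \int_T\Phi_t(B_t)\,d\mu(t)\right)\le \int_T\Phi_t\bigl(g(A_t\nabla C_t,B_t)\bigr)\,d\mu(t),
\]
cannot simply be cited: \cite{HPP} proves the field Jensen inequality only for \emph{one-variable} operator convex functions, and the joint operator convexity of $\Theta$ from \cite{MK} does not formally yield a Choi--Davis--Jensen statement for positive linear maps --- convexity controls convex combinations, not images under merely positive (non-multiplicative, not necessarily completely positive) maps composed inside both arguments, so there is no off-the-shelf result or dilation argument to fall back on.

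The missing idea --- and the actual content of the paper's proof --- is a change of maps that carries out the reduction you only gesture at. Set $X=\int_T\Phi_s(B_s)\,d\mu(s)$ and define
\[
\Psi_t(Y)=X^{-\frac{1}{2}}\Phi_t\left(B_t^{\frac{1}{2}}YB_t^{\frac{1}{2}}\right)X^{-\frac{1}{2}}.
\]
Each $\Psi_t$ is positive and $\int_T\Psi_t(I)\,d\mu(t)=X^{-\frac{1}{2}}XX^{-\frac{1}{2}}=I$, so $(\Psi_t)_{t\in T}$ is a unital field. Since
\[
X^{-\frac{1}{2}}\Phi_t(A_t\nabla C_t)X^{-\frac{1}{2}}=\Psi_t\left(B_t^{-\frac{1}{2}}(A_t\nabla C_t)B_t^{-\frac{1}{2}}\right),
\]
the left-hand side of \eqref{33} equals $X^{\frac{1}{2}}f\left(\int_T\Psi_t\left(B_t^{-\frac{1}{2}}(A_t\nabla C_t)B_t^{-\frac{1}{2}}\right)d\mu(t)\right)X^{\frac{1}{2}}$, and the one-variable field Jensen inequality for the operator convex function $f$ (the result from \cite{HPP} you may legitimately quote), applied to the unital field $(\Psi_t)$, gives exactly your display after multiplying back by $X^{\frac{1}{2}}$ and rewriting $\Phi_t\left(B_t^{\frac{1}{2}}f(\cdot)B_t^{\frac{1}{2}}\right)=\Phi_t\bigl(g(A_t\nabla C_t,B_t)\bigr)$. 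With this construction inserted, the rest of your chain goes through verbatim and coincides with the paper's proof.
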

\begin{proof}
 Assume that $f$ is operator log convex. Then $f$ is operator convex and so $g$ is jointly operator convex
 \cite{E,G}. Put $X=\int_T\Phi_s(B_s)d\mu(s)$ and let the continuous filed of positive linear mappings  $(\Psi_t)_{t\in T}:\mathfrak{A}\to\mathfrak{B}$ be defined by
 $$\Psi_t(Y)=X^{\frac{-1}{2}}
 \Phi_t\left(B_{t}^{\frac{1}{2}}YB_{t}^{\frac{1}{2}}\right)X^{\frac{-1}{2}}$$
 so that $\int_T\Psi_t(I)d\mu(t)=I$. Therefore
 \begin{align*}
  & g\left(\int_T\Phi_t(A_t\nabla C_t)d\mu (t),\int_T\Phi_t(B_t)d\mu(t)\right)=  X^{\frac{1}{2}}f\left( X^{\frac{-1}{2}}\int_T \Phi_t(A_t\nabla C_t)d\mu (t)X^{\frac{-1}{2}}\right) X^{\frac{1}{2}}\\
&= X^{\frac{1}{2}}f\left(\int_T \Psi_t\left( B_t^{\frac{-1}{2}}(A_t\nabla C_t) B_t^{\frac{-1}{2}}\right)
d\mu (t)\right) X^{\frac{1}{2}}\\
&\leq X^{\frac{1}{2}}\left(\int_T \Psi_t\left(f\left( B_t^{\frac{-1}{2}}(A_t\nabla C_t) B_t^{\frac{-1}{2}}\right)\right)d\mu (t)\right) X^{\frac{1}{2}}\quad(\mbox{ by the Jensen operator inequality})\\
&=\int_T\Phi_t\left(B_t^{\frac{1}{2}}f\left(B_t^{\frac{-1}{2}}(A_t\nabla C_t)B_t^{\frac{-1}{2}}\right)B_t^{\frac{1}{2}}\right)d\mu(t)\\
&= \int_T\Phi_t(g(A_t\nabla C_t,B_t))d\mu(t)\\
&\leq \int_T\Phi_t (g(A_t,B_t)\sharp g(C_t,B_t))d\mu(t) \qquad(\mbox{by Corollary \ref{co1}})\\
&\leq \int_T \Phi_t (g(A_t,B_t))\sharp \Phi_t(g(C_t,B_t))d\mu(t)\qquad(\mbox{by operator concavity of $\sharp$})\\
&\leq \left(\int_T \Phi_t (g(A_t,B_t))d\mu(t)\right)\sharp \left(\int_T \Phi_t (g(C_t,B_t))d\mu(t)\right)\quad(\mbox{by Lemma \ref{CSI}}).
 \end{align*}
 For the converse, put  $T=\{1\}$ and let $\mu$ be the counting measure on $T$. If $A$ and $C$ are strictly positive, then with $\Phi(A)=A$ and $B=I$, inequality \eqref{33} implies  the operator log-convexity of $f$.
\end{proof}
\begin{example}
  Let  the operator log-convex function $f:(0,\infty)\to(0,\infty)$ be defined by $f(t)=t^{-1}$. It follows from Theorem \ref{th3} that
  \begin{align*}
    g(\Phi(A\nabla C),\Phi(B))\leq\Phi(g(A,B))\sharp\Phi(g(C,B))
  \end{align*}
  or equivalently
   \begin{align*}
    \Phi(B)\Phi(A\nabla C)^{-1}\Phi(B)\leq\Phi(BA^{-1}B)\sharp\Phi(BC^{-1}B).
  \end{align*}
  Therefore
\begin{align}\label{exam}
\Phi(A\nabla C)^{-1}&\leq  \Phi(B)^{-1}\left(\Phi(BA^{-1}B)\sharp\Phi(BC^{-1}B)\right)\Phi(B)^{-1}\\
&=\Phi(B)^{-1}\Phi(BA^{-1}B)\Phi(B)^{-1}\sharp\Phi(B)^{-1}\Phi(BC^{-1}B)\Phi(B)^{-1}.\nonumber
\end{align}
  Note that it  follows from the operator convexity of $f$ that
 \begin{align*}
\Phi(A\nabla C)^{-1}&=\left(\frac{\Phi(A)+\Phi(C)}{2}\right)^{-1}\\
&\leq \frac{\Phi(A)^{-1}+\Phi(C)^{-1}}{2}\qquad(\mbox{by  operator convexity of $f(t)=t^{-1}$})\\
&\leq \frac{\Phi\left(A^{-1}\right)+\Phi\left(C^{-1}\right)}{2}\qquad(\mbox{by  operator convexity of $f(t)=t^{-1}$}),
\end{align*}
while  with $B=I$, inequality \eqref{exam} provides a sharper inequality:
$$\Phi(A\nabla C)^{-1}\leq \Phi\left(A^{-1}\right)\sharp\Phi\left(C^{-1}\right)\leq\frac{\Phi\left(A^{-1}\right)+\Phi\left(C^{-1}\right)}{2}.$$

\end{example}
\begin{corollary}\label{co2}
 Let $A,B,C$ be strictly positive operators. If $f$ is an operator log-convex function and $g$ is its perspective function, then
 $$g\left(\left\langle\frac{A+C}{2}x,x\right\rangle,\langle Bx,x\rangle\right)\leq \sqrt{\langle g(A,B)x,x\rangle\langle g(C,B)x,x\rangle}$$
\end{corollary}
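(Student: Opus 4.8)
The plan is to specialize Theorem \ref{th3} to the simplest possible index set and to a vector state, so that the operator geometric mean collapses to the ordinary scalar geometric mean. Concretely, fix a nonzero vector $x\in\mathscr{H}$. Because the perspective is homogeneous of degree one on scalars, $g(\lambda a,\lambda b)=\lambda\,g(a,b)$, both sides of the claimed inequality scale like $\|x\|^2$ under $x\mapsto tx$ (the left side through $g(t^2a,t^2b)=t^2g(a,b)$, the right side through the square root of a product of two factors each scaling by $t^2$). Hence there is no loss in assuming $\|x\|=1$. For such $x$, define $\Phi:\mathbb{B}(\mathscr{H})\to\mathbb{C}$ by $\Phi(X)=\langle Xx,x\rangle$. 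This is manifestly a positive linear map, and it is unital since $\Phi(I)=\langle x,x\rangle=1$.

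Next I would invoke Theorem \ref{th3} with the one-point index set $T=\{1\}$ equipped with the counting measure and with the single map $\Phi$ just defined. In that case inequality \eqref{33} reduces to
\begin{align*}
g\big(\Phi(A\nabla C),\Phi(B)\big)\leq \Phi\big(g(A,B)\big)\sharp\Phi\big(g(C,B)\big).
\end{align*}

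The remaining work is to evaluate each term and identify the scalar reductions. Since $\Phi$ is the vector state, $\Phi(A\nabla C)=\langle\frac{A+C}{2}x,x\rangle$ and $\Phi(B)=\langle Bx,x\rangle$ are positive scalars, while $\Phi(g(A,B))=\langle g(A,B)x,x\rangle$ and $\Phi(g(C,B))=\langle g(C,B)x,x\rangle$ are exactly the scalar quantities on the right-hand side of the corollary. On positive scalars the operator geometric mean is the usual geometric mean, $a\sharp b=\sqrt{ab}$, and on scalars $g$ is literally the scalar perspective; substituting these identifications turns the displayed inequality into precisely the assertion of the corollary.

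The argument is essentially a single substitution, so there is no deep obstacle. The one point that must be handled with care is the reduction to $\|x\|=1$: it is the homogeneity of $g$, and hence of both sides, that legitimizes replacing an arbitrary nonzero $x$ by a unit vector while keeping $\Phi$ unital, as required by the hypotheses of Theorem \ref{th3}.
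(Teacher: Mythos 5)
Your proposal is correct and is exactly the intended derivation: the paper states Corollary \ref{co2} without proof as an immediate specialization of Theorem \ref{th3} to the one-point index set $T=\{1\}$ and the vector state $\Phi(X)=\langle Xx,x\rangle$, under which the operator geometric mean and the perspective reduce to their scalar versions. Your additional homogeneity remark justifying the reduction to $\|x\|=1$ is harmless but unnecessary, since the statement is asserted only for unit vectors, for which $\Phi$ is automatically unital.
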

for every unit vector $x$.
\begin{example}
  Applying Corollary \ref{co2} to the operator log-convex function $f(t)=t^{-1}$ defined on $(0,\infty)$ we get
 \begin{align*}
 \langle  B x,x\rangle\left\langle \frac{A+C}{2}x,x\right\rangle^{-1}\langle Bx,x\rangle &\leq \sqrt{\langle  BA^{-1}Bx,x\rangle\langle  BC^{-1}Bx,x\rangle}\\
 &\leq \left\langle B\left(\frac{A^{-1}+C^{-1}}{2}\right)Bx,x\right\rangle
 \end{align*}
\end{example}
for every unit vector $x$.

\bibliographystyle{amsplain}

\begin{thebibliography}{99}



\bibitem{AH} T. Ando and F. Hiai, \textit{Operator log-convex functions and operator means}, arXiv:0911.5267v4 [math.FA], (2010).

 \bibitem{G} A. Ebadian, E. Nikoufar and M.E. Gordji, \textit{Perspectives of matrix convex functions}, Proc. Natl. Acad. Sci. USA \textbf{108} (2011), no. 18 7313--7314.

\bibitem{E} E.G. Effros, \textit{ A matrix convexity approach to some celebrated quantum inequalities}, Proc. Natl. Acad. Sci. USA \textbf{106} (2009), no. 4, 1006--1008.

\bibitem{Fu} T. Furuta, H. Mi\'ci\'c, J. Pe\v{c}ari\'{c} and Y. Seo, \textit{Mond--Pecaric Method in Operator Inequalities}, Zagreb, Element, 2005.

\bibitem{HPP} F. Hansen, J. Pe\v{c}ari\'{c} and I. Peri\'{c}, \textit{Jensen's operator inequality and its converses}, Math. Scand. \textbf{100} (2007), no. 1, 61--73.

\bibitem{HP} F. Hansen and G.K. Pedersen, \textit{Jensen's operator inequality}, Bull. London Math. Soc. \textbf{35} (2003), no. 4, 553--564.

\bibitem{IMP} S. Iveli\'{c}, A. Matkovi\'{c} and  J. Pe\v{c}ari\'{c}, \textit{On a Jensen--Mercer operator inequality}, Banach J. Math. Anal. \textbf{5} (2011), no. 1, 19--28.

 \bibitem{KM} M. Kian and M.S. Moslehian, \textit{Operator inequalities related to $Q$-class functions}, Math. Slovaca, (to appear).

\bibitem{KA} F. Kudo and T. Ando, \textit{Means of positive linear operators}, Math. Ann. \textbf{246} (1980), 205--224.

\bibitem{MK} M.S. Moslehian and M. Kian, \textit{Non-commutative $f$-divergence functional}, Math. Nachr., 1--16 (2013), DOI 10.1002/mana.201200194.


\end{thebibliography}

\end{document}